\title{
Nonlinear network identifiability: The static case
}
\author{Renato Vizuete and Julien M. Hendrickx
\thanks{*This work was supported by F.R.S.-FNRS via the \emph{KORNET} project and via the Incentive Grant for Scientific Research (MIS) \emph{Learning from Pairwise Comparisons}, and by the \emph{RevealFlight} Concerted Research Action (ARC) of the Fédération Wallonie-Bruxelles.}
\thanks{R.~Vizuete and J.~M.~Hendrickx are with ICTEAM institute, UCLouvain, B-1348, Louvain-la-Neuve, Belgium. R.~Vizuete is a FNRS Postdoctoral Researcher - CR.
{\tt\small renato.vizueteharo@uclouvain.be},
{\tt\small julien.hendrickx@uclouvain.be}\protect.}
}
\newcommand{\vertiii}[1]{{\left\vert\kern-0.25ex\left\vert\kern-0.25ex\left\vert #1 
    \right\vert\kern-0.25ex\right\vert\kern-0.25ex\right\vert}}
\newtheorem{definition}{Definition}
\newtheorem{assumption}{Assumption}
\newtheorem{theorem}{Theorem}
\newtheorem{corollary}{Corollary}
\newtheorem{proposition}{Proposition}
\newtheorem{lemma}{Lemma}
\newtheorem{remark}{Remark}
\newtheorem{example}{Example}
\newcommand{\F}{\mathcal{F}}
\newcommand{\Fone}{\mathcal{F}_Z}
\newcommand{\Ftwo}{\mathcal{F}_{Z,NL}}
\newcommand{\R}{\mathbb R}
\begin{document}

\maketitle
\thispagestyle{empty}

\begin{abstract}
We analyze the problem of network identifiability with nonlinear functions associated with the edges. We consider a static model for the output of each node and by assuming a perfect identification of the function associated with the measurement of a node, we provide conditions for the identifiability of the edges in a specific class of functions. First, we analyze the identifiability conditions in the class of all nonlinear functions and show that even for a path graph, it is necessary to measure all the nodes except by the source. Then, we consider analytic functions satisfying $f(0)=0$ and we provide conditions for the identifiability of paths and trees. Finally, by restricting the problem to a smaller class of functions where none of the functions is linear, we derive conditions for the identifiability of directed acyclic graphs. 
Some examples are presented to illustrate the results.
\end{abstract}

\section{Introduction}

Networked systems composed by nodes or subsystems interacting with each other are ubiquitous \cite{bullo2022lectures}. In several of these systems, the knowledge of the dynamics associated with edges is essential for the analysis of the system and design of control algorithms. However, the identification of the networked systems from partial measurements without disconnections of some parts of the network can be really challenging since a measured signal depends on the combination of the dynamics of potentially many edges.

There has been some recent works in the linear case on the conditions of identifiability: when is it possible to unambiguously recover local dynamics from a set of measured nodes? This question is important in order to design experiments and position sensors and excitations \cite{ramaswamy2019generalized,bombois2023informativity,kivits2023identifiability}. This depends mainly on the topology of the network and on the position of the excitation and measured signal. Graph theoretical conditions are available in full measurement case or full excitation \cite{hendrickx2019identifiability}, but not in the general case yet \cite{legat2020local,legat2021path,bazanella2019network,cheng2023necessary}. 
However, most actual systems of interest are nonlinear, including many different research fields like coupled oscillators \cite{dorfler2014synchronization}, gene regulatory networks \cite{pan2012reconstruction}, biochemical reaction networks \cite{aalto2020gene}, social networks \cite{bizyaeva2023nonlinear}, among others. While linear systems usually provide a local approximation of nonlinear phenomena, no one to the best of our knowledge has studied the identifiability question for nonlinear systems.

The identification of a nonlinear system is itself a challenging problem due to the variety of potential models (e.g., Hammerstein, Wiener, Volterra series) and the constant arising of new formulations for particular applications. Depending on the type of nonlinearities and its location (i.e., at the level of inputs, outputs or in the middle of interactions), certain models can be more suited for specific applications, while others could not give a good description of some systems \cite{janczak2004identification,nelles2020nonlinear,paduart2010identification}. In addition to the complexity of a single nonlinear model, a network involves several nonlinear systems associated with the edges, which generates complex collective behaviors and increase considerably the difficulties of the identification problem. 

In the nonlinear case, the conditions for identifiability of networks do not depend only on the network topology but also on the types of nonlinear functions. 
For instance, trigonometric functions in coupled oscillators  \cite{dorfler2014synchronization} are very different from the activation functions in neural networks that can be nondifferentiable \cite{aggarwal2018neural}. Furthermore, in heterogeneous networks, different types of functions could be associated with edges in the same network. In addition,
the class of functions considered for the problem of identifiability could be determinant. It is clear that if we restrict the problem to a small class of functions, the conditions for identifiability of a network could be relaxed, but the functions in the class could not fit real models. Moreover, properties of functions such as continuity, differentiability, analyticity, etc., could play an important role in the determination of conditions for the identifiability of networks.

We study here the question of identifiability in the nonlinear setting, assuming in this first work that the local dynamics have very simple structure (i.e., the output of a node is entirely determined by static interactions with the neighbors). We show that, surprisingly, the conditions for identifiability in directed acyclic graphs are weaker  than in the linear case, provided that the dynamics are indeed not linear, and do not involve constant output component (when they do, the problem is indeed unsolvable).
We explain this by the fact that in the linear case, the loss of identifiability often results from ambiguities made possible by the superposition principle/superposition of signal, which is no longer possible in the nonlinear case.

In this work, we provide a formulation of the network identifiability problem in the nonlinear case, by considering a static model in the edges. By restricting the problem to a specific class of functions, we provide identifiability conditions for paths and trees. Furthermore, by considering a smaller class of functions, we derive conditions for the identifiability of directed acyclic graphs.

\section{Problem formulation}

\subsection{Model class}

Since the type of nonlinear dynamics in a network can be really complex, in this preliminary work, we will consider a static additive model to focus on the effect of the nonlinearities. Therefore, we exclude dynamical processes that involve any memory at the level of nodes or edges. Our objective is to generalize the results of this paper to more complex dynamical models in future works. 

For a network composed by $n$ nodes, we consider that the output of each node $i$ is given by:
\begin{equation}\label{eq:nonlinear_model}
y_i^k=\sum_{j\in \mathcal{N}_i}f_{i,j}(y_j^{k-1})+u_i^{k-1}, \;\; \text{for all  } i\in\{1,\ldots,n\},  
\end{equation}
where the superscripts denote the value of the inputs and outputs at the specific time instant, $f_{i,j}$ is a nonlinear function, $\mathcal{N}_i$ is the set of in-neighbors of node $i$, and $u_i$ is an external excitation signal. The node $i$ is not included in $\mathcal{N}_i$, since it would imply a dynamical process at the level of the node. The model  \eqref{eq:nonlinear_model} corresponds to a nonlinear static version of the model considered in \cite{hendrickx2019identifiability,legat2020local,legat2021path}, where the nonlinearities are located in the edges. In this case, the output of a node $i$ is determined by its own excitation signal $u_i$, and the outputs of the neighbors $y_j$ affected by a nonlinear function $f_{i,j}$ associated with the edge that connects the neighbor.
Notice that when the functions $f_{i,j}$ in \eqref{eq:nonlinear_model} are linear, the conditions for identifiability of linear networks derived in \cite{hendrickx2019identifiability,legat2020local,legat2021path} also hold. In this work, we will consider analytic functions with a Taylor series that converges to the function for all $x\in\R$. This representation as power series will allow us to derive conditions for the identifiability of nonlinear networks.

Model \eqref{eq:nonlinear_model} corresponds to the full excitation case where all the nodes are excited. The nonzero functions $f_{i,j}$ between the agents define the topology of the network $G$, forming the set of edges $E$. In this work, we do not consider multi-edges between two nodes.

\begin{assumption}\label{ass:full_excitation}
    The topology of the network is known, where the presence of an edge implies a nonzero function.
\end{assumption}

Assumption~\ref{ass:full_excitation} implies that we know which nodes are connected by nonzero functions. 
The objective is to determine which nodes need to be measured to identify all the nonlinear functions in the network.

Similarly to \cite{hendrickx2019identifiability,legat2020local,legat2021path}, for the identification process we assume that in an ideal scenario the relations between excitations and outputs of the nodes have been perfectly identified. In this work, we restrict our attention to networks that do not contain any cycle (i.e., directed acyclic graphs). 
This implies that when we measure a node $i$, we identify the function $F_i^k$:
\begin{multline}\label{eq:function_Fi}
    \!\!\!\!\!y_i^k\!=\!u_i^{k-1}\!+F_i^k(u_1^{k-2},\ldots,u_1^{k-m_1},\ldots,u_{n_i}^{k-2},\ldots,u_{n_i}^{k-m_{n_i}}),\\
    1,\dots,n_i\in \mathcal{N}_i^p,
\end{multline}
where $\mathcal{N}_i^p$ denotes the set of nodes that have a path to the measured node $i$.
The function $F_i^k$ is implicitly defined by \eqref{eq:nonlinear_model} and only depends on a finite number of inputs due to the absence of memory on the edges and nodes, and the absence of cycles. 
With a slight abuse of notation, we use the superscript in the function $F_i^{k-s}$ to indicate that all the inputs in \eqref{eq:function_Fi} are delayed by $s$.   

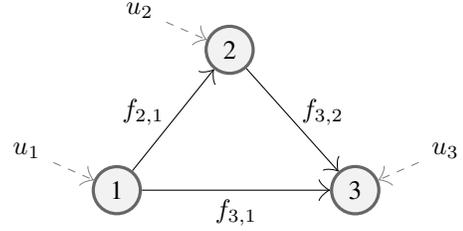
\begin{figure}
    \centering

\vspace{1.5mm}
    
    \begin{tikzpicture}
    [
roundnodes/.style={circle, draw=black!60, fill=black!5, very thick, minimum size=1mm},roundnode/.style={circle, draw=white!60, fill=white!5, very thick, minimum size=1mm}
]
\node[roundnodes](node1){1};
\node[roundnodes](node2)[above=of node1,yshift=2mm,xshift=1.5cm]{2};
\node[roundnodes](node3)[right=2.5cm of node1]{3};
\draw[-{Classical TikZ Rightarrow[length=1.5mm]}] (node1) to node [left,swap,yshift=1mm] {$f_{2,1}$} (node2);
\draw[-{Classical TikZ Rightarrow[length=1.5mm]}] (node2) to node [right,swap,yshift=1mm] {$f_{3,2}$} (node3);
\draw[-{Classical TikZ Rightarrow[length=1.5mm]}] (node1) to node [below,swap] {$f_{3,1}$} (node3);

\node[roundnode](u1)[above=of node1,yshift=-12mm,xshift=-12mm]{$u_1$};
\node[roundnode](u2)[above=of node2,yshift=-12mm,xshift=-12mm]{$u_2$};
\node[roundnode](u3)[above=of node3,yshift=-12mm,xshift=12mm]{$u_3$};

\draw[gray,dashed,-{Classical TikZ Rightarrow[length=1.5mm]}] (u1) -- (node1);
\draw[gray,dashed,-{Classical TikZ Rightarrow[length=1.5mm]}] (u2) -- (node2);
\draw[gray,dashed,-{Classical TikZ Rightarrow[length=1.5mm]}] (u3) -- (node3);

\end{tikzpicture}
    \caption{The function $F_3^k$ associated with the measurement of the node 3 depends on the past inputs of the nodes 1 and 2 that have a path to the node 3. 
    }
    \label{fig:DAG_Fone}
\end{figure}

\begin{example}
    Let us consider the graph in Fig.~\ref{fig:DAG_Fone} where the measurement of the node 3 provides the output:
    \begin{align}
        y_3^k&=u_3^{k-1}+F_3^k\nonumber\\
        &=u_3^{k-1}+f_{3,2}(u_2^{k-2}+f_{2,1}(u_1^{k-3}))+f_{3,1}(u_1^{k-2}).\label{eq:DAG_different_paths}
    \end{align}
    We can observe that the function $F_3^k$ depends on the inputs of the nodes 1 and 2 that have a path to the node 3.
\end{example}

\subsection{Identifiability}

The identifiability problem is related to the possibility of identifying the functions $f_{i,j}$ based on several measurements. For this, we introduce the following relationship between the measurements and the functions $f_{i,j}$.

\begin{definition}[Set of measured functions]
    Given a set of measured nodes $\mathcal{N}^m$, the set of measured functions $F(\mathcal{N}^m)$ associated with $\mathcal{N}^m$ is given by:
    $$
    F(\mathcal{N}^m):=\{F_i^k\;|\;i\in \mathcal{N}^m\}.
    $$
\end{definition}
We say that a function $f_{i,j}$ associated with an edge satisfies $F(\mathcal{N}^m)$ if $f_{i,j}$ can lead to $F(\mathcal{N}^m)$ through \eqref{eq:nonlinear_model}.

For completely arbitrary functions, the identifiability problem can be really challenging or even unrealistic. For this reason, we restrict the identifiability problem to a certain class of functions $\F$, which implies that the functions associated with the edges belong to $\F$ and that the identifiability is considered only among the functions belonging to $\F$. The different classes of functions will be specified depending on the results.

\begin{definition}[Edge identifiable]
In a network $G$, an edge $f_{i,j}$ is identifiable in a class $\F$ if given a set of measured functions $F(\mathcal{N}^m)$, every set of functions in $\F$ leading to $F(\mathcal{N}^m)$ has the same $f_{i,j}$.
\end{definition}

\begin{definition}[Network identifiable]\label{def:network_id}
A network $G$ is identifiable in a class $\F$ if all the edges are identifiable in the class $\F$.
\end{definition}

The function $F_i^k$ in \eqref{eq:function_Fi} is the most complete information that we can obtain when we measure a node $i$. This implies that if it is not possible to identify the functions $f_{i,j}$ with $F_i^k$, these edges are unidentifiable. On the contrary, if the functions $f_{i,j}$ are identifiable, it seems reasonable that under some conditions, the function $F_i^k$ can be well approximated after sufficiently long experiments, which could allow us to identify the functions $f_{i,j}$ approximately.

\subsection{First results}

We provide a result about the information that we can obtain with the measurement of sinks and sources\footnote{A source is a node with no incoming edges. A sink is a node with no outgoing edges.}.

\begin{proposition}[Sinks and sources]\label{prop:sinks_sources}
The measurement of the sources is never necessary for the identifiability of the network. The measurement of all the sinks is necessary for the identifiability of the network. 
\end{proposition}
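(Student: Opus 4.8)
The plan is to treat the two claims separately, relying on the structural fact recorded in \eqref{eq:function_Fi}: each measured function $F_i^k$ depends only on the inputs of nodes in $\mathcal{N}_i^p$, i.e.\ it is built from the edge functions lying on directed paths that terminate at the measured node $i$.

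\emph{Sources are never necessary.} First I would note that a source $s$ has $\mathcal{N}_s=\emptyset$, so \eqref{eq:nonlinear_model} reduces to $y_s^k=u_s^{k-1}$; comparing with \eqref{eq:function_Fi}, the associated measured function $F_s^k$ is identically zero. Consequently, adding a source to the measured set $\mathcal{N}^m$ contributes no constraint on any edge function $f_{i,j}$, so the information carried by $F(\mathcal{N}^m)$ and by $F(\mathcal{N}^m\setminus\{s\})$ is identical. Hence if every edge is identifiable in $\F$ using $\mathcal{N}^m$, it remains identifiable after deleting the sources, so their measurement is never required.

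\emph{All sinks are necessary.} For the second claim I would argue by contradiction: suppose a sink $t$ with at least one incoming edge is not measured, and let $f_{t,j}$ be such an edge entering $t$ (nonzero by Assumption~\ref{ass:full_excitation}). The key step is to show that $f_{t,j}$ does not appear in any measured function $F_i^k$ with $i\in\mathcal{N}^m$. The function $f_{t,j}$ influences the global dynamics only through the value $y_t$ it helps produce; for it to appear in $F_i^k$, the signal $y_t$ must propagate to node $i$, i.e.\ there must be a directed path from $t$ to $i$. But $t$ is a sink, so it has no outgoing edge and hence no path to any node $i\neq t$, which means $f_{t,j}$ influences only $y_t$; since $t\notin\mathcal{N}^m$ it is therefore absent from $F(\mathcal{N}^m)$.

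It follows that $f_{t,j}$ can be replaced by any other admissible function in $\F$ while leaving $F(\mathcal{N}^m)$ unchanged, so the edge $f_{t,j}$ is not identifiable and, by Definition~\ref{def:network_id}, the network is not identifiable in $\F$. This forces every sink carrying at least one incoming edge to be measured. I expect the main obstacle to be making the second step fully rigorous: formally justifying that an incoming edge of a sink never enters the functional composition defining a measured function at another node. This rests on the path-dependency structure of $F_i^k$ in \eqref{eq:function_Fi}, namely that $F_i^k$ is determined by the nodes in $\mathcal{N}_i^p$, combined with the fact that a sink has a directed path to no other node.
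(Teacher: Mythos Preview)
Your proposal is correct and follows essentially the same approach as the paper: for sources you observe that $y_s^k=u_s^{k-1}$ carries no edge information, and for sinks you argue that the incoming edges of a sink appear in no other node's measured function because the sink has no outgoing path. The paper's proof is more terse but makes the identical structural observations; your version is simply more explicit about the contradiction and the path-dependency justification.
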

\begin{proof}
    First, the measurement of any source $j$ generates the output
    $
    y_j^k=u_j^{k-1},
    $
    which does not provide any  information about functions associated with edges in the network.
    Next, let us consider a sink $i$ with $m$ incoming edges. The measurement of this sink provides an output:
    \begin{equation}\label{eq:output_sink_functions}
    y_i^k=u_i^{k-1}+f_{i,1}(y_1^{k-1})+\cdots+f_{i,m}(y_m^{k-1}),    
    \end{equation}
    and it is the only way of obtaining information of the functions $f_{i,1},\ldots,f_{i,m}$. Thus, the measurement of all the sinks is necessary. 
\end{proof}

The following lemma provides a result about the structure of the function $F_i^k$ associated with the measurement of a node $i$ with respect to the excitation signals of the in-neighbors.

\begin{lemma}\label{lemma:unique_functions}
    Let $j$ be an in-neighbor of a measured node $i$ and the function $F_i^k$. Then, by assuming all the variables but $u_j^{k-2}$ constant, we have:
    $$
    F_i^k=\alpha+f_{i,j}(u_j^{k-2}+\beta),
    $$
    where $\alpha$ and $\beta$ are constants with respect to $u_j^{k-2}$.
\end{lemma}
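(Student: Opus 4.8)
The plan is to read off the dependence of $F_i^k$ on $u_j^{k-2}$ directly from the recursive structure of \eqref{eq:nonlinear_model}, by showing that this particular input enters the measured function through the single edge $(i,j)$ and nowhere else. Evaluating \eqref{eq:nonlinear_model} at the measured node $i$ and separating the term attached to the edge $(i,j)$ gives
\[
y_i^k = u_i^{k-1} + f_{i,j}(y_j^{k-1}) + \sum_{\ell\in\mathcal{N}_i\setminus\{j\}} f_{i,\ell}(y_\ell^{k-1}),
\]
and expanding the distinguished argument one more step via \eqref{eq:nonlinear_model} at node $j$,
\[
y_j^{k-1} = u_j^{k-2} + \sum_{m\in\mathcal{N}_j} f_{j,m}(y_m^{k-2}),
\]
isolates $u_j^{k-2}$ as an additive quantity inside $f_{i,j}$. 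I would then define $\beta := \sum_{m\in\mathcal{N}_j} f_{j,m}(y_m^{k-2})$ and let $\alpha$ denote the sum of all remaining terms of $F_i^k = y_i^k - u_i^{k-1}$.

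The core of the argument is a delay-bookkeeping claim obtained by unrolling \eqref{eq:nonlinear_model}: in a directed acyclic graph, the output $y_p^t$ depends on an input $u_q^s$ only if there is a directed path from $q$ to $p$ whose length is exactly $t-1-s$, since each edge contributes one unit of delay and the node's own input accounts for the initial unit. I would prove this by induction on the recursive expansion of \eqref{eq:nonlinear_model}, acyclicity guaranteeing that the expansion terminates so that $F_i^k$ is a finite composition.

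Applying this claim is then immediate. For each $\ell\in\mathcal{N}_i$, the argument $y_\ell^{k-1}$ can depend on $u_j^{k-2}$ only through a path from $j$ to $\ell$ of length $(k-1)-1-(k-2)=0$, i.e.\ only when $\ell=j$; hence the sole term of $y_i^k$ carrying $u_j^{k-2}$ is $f_{i,j}(y_j^{k-1})$, and $\alpha$ is constant in $u_j^{k-2}$. The same claim applied to each $y_m^{k-2}$ inside $\beta$ would require a path of length $(k-2)-1-(k-2)=-1$, which is impossible, so $\beta$ is constant in $u_j^{k-2}$ as well. Substituting $y_j^{k-1}=u_j^{k-2}+\beta$ yields $F_i^k = \alpha + f_{i,j}(u_j^{k-2}+\beta)$, as claimed.

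The step I expect to be delicate is making the delay-bookkeeping claim fully rigorous rather than merely plausible: one must specify precisely what ``depends on'' means for the implicitly defined $F_i^k$ and verify the one-edge-one-delay accounting uniformly, in particular keeping distinct time-shifted inputs of the same node (such as $u_j^{k-2}$ and $u_j^{k-3}$) separate. Once this induction is established, the collection of terms into $\alpha$ and $\beta$ is routine.
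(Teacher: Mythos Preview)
Your proposal is correct and follows essentially the same approach as the paper: expand $F_i^k$ one recursion step, then observe that $u_j^{k-2}$ can only enter through the single term $f_{i,j}(u_j^{k-2}+F_j^{k-1})$ because every $F_\ell^{k-1}$ depends only on further-delayed inputs and there are no multi-edges. The paper simply asserts the delay fact in one sentence, whereas you cast it as an explicit path-length/delay correspondence to be proved by induction; this is a more careful write-up of the same idea rather than a different argument.
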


\begin{proof}
    According to \eqref{eq:function_Fi}, the function $F_i^k$ of a measured node $i$ is given by:
    \begin{align}
        F_i^k&=\sum_{\ell=1}^m f_{i,\ell}(y_\ell^{k-1})\nonumber\\
        &=\sum_{\ell=1}^m f_{i,\ell}(u_\ell^{k-2}+F_{\ell}^{k-1}) \label{eq:sum_measure_node},
    \end{align}
    where $m$ is the number of in-neighbors of the node $i$. All the functions $F_{\ell}^{k-1}$ depend on inputs delayed by 1, which implies that no $F_{\ell}^{k-1}$ depends on $u_j^{k-2}$.
Finally, no $f_{i,p}$ with $p\neq j$ can be a function of $u_j^{k-2}$ since there are no multi-edges. 
\end{proof}

Lemma~\ref{lemma:unique_functions} implies that $f_{i,j}$ in \eqref{eq:sum_measure_node} is the only function that depends on $u_j^{k-2}$, and $F_{j}^{k-1}$ does not depend on $u_j^{k-2}$.

\section{Paths and trees}

\subsection{Strong requirements for general nonlinear functions}

Since the conditions for the identifiability of linear networks are based on the existence of paths in the network that carry information from the excited nodes to the measured nodes \cite{hendrickx2019identifiability,legat2021path}, we first focus on the conditions for the identifiability of a path graph \footnote{A path graph is a graph that can be drawn so that all the nodes and edges lie on a single straight line.} in the nonlinear case.

In the linear case, for this graph topology we only need to measure the sink to identify all the transfer functions of the network thanks to the superposition principle \cite{hendrickx2019identifiability}. However, this is not true for the nonlinear case. 

\begin{example}[Path graph]
Fig.~\ref{fig:path_graph} presents a simple path graph with 3 nodes where the measurement of the sink is not enough to identify the network when general nonlinear functions are considered.     
\end{example}

\begin{proposition}[General nonlinear functions]\label{prop:path_general}
    For identifiability of a path graph in the class of general nonlinear functions, it is necessary to measure all the nodes except by the source.
\end{proposition}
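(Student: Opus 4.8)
The plan is to show that if any non-source node $j$ is left unmeasured, then the function $f_{j+1,j}$ on the edge leaving $j$ (toward the sink) cannot be distinguished from an alternative, so the network fails to be identifiable. I label the path as $1 \to 2 \to \cdots \to n$, with node $1$ the source; the claim is that every node in $\{2,\ldots,n\}$ must be measured. First I would observe, using Proposition~\ref{prop:sinks_sources}, that the sink $n$ must be measured, which handles the extreme case. For a general unmeasured interior node $j$ with $2 \le j \le n-1$, the idea is that measuring $j$ is the only way to access the function $f_{j,j-1}$ on its incoming edge directly; any other measured node $i > j$ only sees $f_{j,j-1}$ composed \emph{inside} the downstream functions $f_{j+1,j}, f_{j+2,j+1}, \ldots$, and this composition leaves a gauge freedom.

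The key step is to construct an explicit ambiguity. Consider the two consecutive edges $f_{j,j-1}$ and $f_{j+1,j}$ incident to the unmeasured node $j$. Along the path, node $j$'s output enters the rest of the network only through $f_{j+1,j}(y_j^{k-1})$, where $y_j^{k-1} = u_j^{k-2} + f_{j,j-1}(y_{j-1}^{k-2})$. I would define a second valid configuration by picking any bijection $g:\R\to\R$ and setting
\begin{equation}\label{eq:path_gauge}
\tilde f_{j,j-1} = g\circ f_{j,j-1}, \qquad \tilde f_{j+1,j}(x) = f_{j+1,j}\bigl(x - g(\cdot)\text{-shift}\bigr),
\end{equation}
the precise form being dictated by Lemma~\ref{lemma:unique_functions}, which tells us that in any measured $F_i^k$ the variable $u_j^{k-2}$ enters only through a single term $f_{i,j}$-type block of the form $\alpha + f_{i,j}(u_j^{k-2}+\beta)$. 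Because $j$ is unmeasured, $u_{j-1}^{k-s}$ and $u_j^{k-2}$ never appear in a measured function except bundled together through the composition $f_{j+1,j}(u_j^{k-2} + f_{j,j-1}(\cdot))$. Hence I am free to absorb any invertible transformation of the inner map $f_{j,j-1}$ into a compensating reparametrization of the argument of $f_{j+1,j}$, leaving every measured $F_i^k$ unchanged. The cleanest realization is an additive shift: replace $f_{j,j-1}$ by $f_{j,j-1}+c$ and simultaneously replace $f_{j+1,j}(\cdot)$ by $f_{j+1,j}(\cdot - c)$; this preserves all outputs of measured nodes while producing a genuinely different pair $(\tilde f_{j,j-1}, \tilde f_{j+1,j})$, so $f_{j,j-1}$ is not identifiable.

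The main obstacle I anticipate is verifying that the altered functions still lie in the admitted class and that the two configurations truly produce identical measured functions for \emph{every} measured node, not just node $j+1$; since the downstream nodes $j+2,\ldots,n$ see node $j$'s contribution only as it is first processed by the unchanged composite $f_{j+1,j}(\cdot - c)\circ(\,\cdot+c)$, I would argue by induction down the path that the output $y_{j+1}^{k}$ is invariant, and therefore so is every $y_i^k$ for $i > j+1$. A subtle point is ensuring the shift $c$ does not accidentally coincide with the identity transformation; since $f_{j+1,j}$ is nonzero (Assumption~\ref{ass:full_excitation}) and we work in the class of \emph{general} nonlinear functions with no normalization such as $f(0)=0$, any $c\neq 0$ yields a distinct admissible pair, completing the argument that measuring all non-source nodes is necessary.
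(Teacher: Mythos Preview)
Your additive-shift construction---replace $f_{j,j-1}$ by $f_{j,j-1}+c$ and $f_{j+1,j}$ by $f_{j+1,j}(\cdot-c)$---is exactly the paper's argument, and your added care in propagating the invariance to all downstream measured nodes is correct (the paper states this more tersely). However, the more general gauge freedom you sketch around~\eqref{eq:path_gauge} is not actually available: because the excitation $u_j^{k-2}$ is injected additively \emph{between} $f_{j,j-1}$ and $f_{j+1,j}$, matching $\tilde f_{j+1,j}\bigl(u+g(w)\bigr)=f_{j+1,j}(u+w)$ for all $u,w$ forces $g(u+w)=u+g(w)$, i.e.\ $g$ must be an additive shift; so your ``cleanest realization'' is in fact the only one, and the bijection framing (with its ill-formed right-hand side) should simply be dropped. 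One further difference: the paper's proof also establishes sufficiency---that measuring every non-source node \emph{does} identify all the $f_{i,i-1}$ by setting all inputs to zero except $u_{i-1}^{k-2}$---which the proposition as literally stated does not demand but which completes the picture.
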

\begin{proof}
 Let us consider a path graph with $n>2$ nodes and a node $i$ in the middle, which is neither the source nor the sink. The output of the node $i+1$ is given by:
\begin{align}
y_{i+1}^k&=u_{i+1}^{k-1}+F_{i+1}^k\label{eq:F_ik}\\
&=u_{i+1}^{k-1}+f_{i+1,i}(u_i^{k-2}+f_{i,i-1}(u_{i-1}^{k-3}+F_{i-1}^{k-2})). \nonumber
\end{align}
If the node $i$ is not measured and we consider the functions $\tilde f_{i,i-1}(x)=f_{i,i-1}(x)+\gamma$ and $\tilde f_{i+1,i}(x)=f_{i+1,i}(x-\gamma)$ with $\gamma\neq 0$, the function $F_{i+1}^k$ in \eqref{eq:F_ik} is the same, which implies that the path graph cannot be identified. 

On the other hand, if we measure all the nodes, we know the function $F_j^k$ associated with any node $j$ in the network. Let us consider a node $i$ with an in-neighbor and we set all the inputs to 0 except $u_i^{k-1}$. Then, the measurement of the node~$i$ gives us:
\begin{align}
    y_i^k&=u_i^{k-1}+F_i^k\label{eq:F_iik}\\
    &=u_i^{k-1}+f_{i,i-1}(u_i^{k-2}+F_{i-1}^{k-1}(0))\nonumber
\end{align}
If there is another function $\tilde f_{i,i-1}$ satisfying $F_{i}^k$ in \eqref{eq:F_iik}, we would have for all $u_i^{k-2}\in\R$:
$$
f_{i,i-1}(u_i^{k-2}+F_{i-1}^{k}(0))=\tilde f_{i,i-1}(u_i^{k-2}+F_{i-1}^{k}(0)),
$$
which implies that $f_{i,i-1}=\tilde f_{i,i-1}$ and we can identify $f_{i,i-1}$.
Following a similar approach for the other nodes, we can identify all the nonlinear functions in the path graph. Finally, by Proposition~\ref{prop:sinks_sources}, it is never necessary to measure the source.
\end{proof}

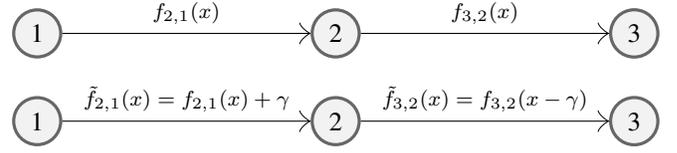
\begin{figure}
    \centering

\vspace{1mm}
    
    \begin{tikzpicture}
    [
roundnodes/.style={circle, draw=black!60, fill=black!5, very thick, minimum size=1mm},
roundnode/.style={circle, draw=black!60, fill=red!30, very thick, minimum size=1mm}
]

\node[roundnodes](node1){1};
\node[roundnodes](node2)[right=3.3cm of node1]{2};
\node[roundnodes](node3)[right=3.3cm of node2]{3};

\draw[-{Classical TikZ Rightarrow[length=1.5mm]}] (node2) to node [above,swap] {\footnotesize $f_{3,2}(x)$} (node3);
\draw[-{Classical TikZ Rightarrow[length=1.5mm]}] (node1) to node [above,swap] {\footnotesize $f_{2,1}(x)$} (node2);

\node[roundnodes](node4)[below=of node1, yshift=5mm]{1};
\node[roundnodes](node5)[right=3.3cm of node4]{2};
\node[roundnodes](node6)[right=3.3cm of node5]{3};

\draw[-{Classical TikZ Rightarrow[length=1.5mm]}] (node5) to node [above,swap] {\footnotesize $\tilde{f}_{3,2}(x)=f_{3,2}(x-\gamma)$} (node6);
\draw[-{Classical TikZ Rightarrow[length=1.5mm]}] (node4) to node [above,swap] {\footnotesize $\tilde{f}_{2,1}(x)=f_{2,1}(x)+\gamma$} (node5);

\end{tikzpicture}
\caption{A path graph with 3 nodes and different nonlinear  functions that satisfy $F_3^k=\tilde F_3^k$. For any $\gamma\neq 0$, the measurement of the sink is not enough for the identification of the network.}
    \label{fig:path_graph}
\end{figure}

Notice that in the proof of Proposition~\ref{prop:path_general} we do not use properties of analytic functions and the results are also valid for nonlinear functions that are not analytic.
Proposition~\ref{prop:path_general} shows that even in a simple graph topology like a path graph, which is the key for the identification of more complex network topologies, the identification of general nonlinear functions cannot be performed by only measuring the sink. This is due to the constant factor associated with the static behavior of the function. 

\subsection{Identifiability conditions for functions with no constant effect}

We restrict the identifiability problem to a smaller class of functions without a static component.

\begin{definition}[Class of functions $\Fone$]
Let $\Fone$ be the class of functions $f:\R\to\R$ with the following properties:
\begin{enumerate}
    \item $f$ is analytic in $\R$.
    \item $f(0)=0$.
\end{enumerate}
\end{definition}
Thus, we consider that for any function in $\Fone$, the Taylor series at 0 converges to the function for all $x\in \R$. Since the power series is unique, there is no loss of generality by considering the Taylor series at 0 and not centered at a different point in $\R$.

Notice that the class $\Fone$ encompasses numerous nonlinear functions \cite{abramowitz1965handbook}, including polynomial functions which are used for the approximation of continuous functions through the Weierstrass Approximation theorem \cite{llavona1986approximation}. Also, there is no loss of generality with the class $\Fone$ since the results are valid for the variable part of the functions.

\begin{lemma}\label{lemma:sinks}
    For identifiability in the class $\Fone$, the measurement of a node provides the identification of all the incoming edges of the node.
\end{lemma}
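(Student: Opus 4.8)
The plan is to show that measuring node $i$ lets us read off each incoming function $f_{i,j}$ directly, by choosing an excitation that isolates the edge $(i,j)$ and exploits the property $f(0)=0$ of the class $\Fone$. Starting from the decomposition \eqref{eq:sum_measure_node}, namely $F_i^k=\sum_{\ell} f_{i,\ell}(u_\ell^{k-2}+F_\ell^{k-1})$, Lemma~\ref{lemma:unique_functions} already tells us that fixing every variable but $u_j^{k-2}$ gives $F_i^k=\alpha+f_{i,j}(u_j^{k-2}+\beta)$ with $\alpha=\sum_{\ell\neq j}f_{i,\ell}(u_\ell^{k-2}+F_\ell^{k-1})$ and $\beta=F_j^{k-1}$. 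My goal is therefore to select inputs forcing both $\alpha=0$ and $\beta=0$.

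The key auxiliary step is to establish that, in the class $\Fone$, setting all excitation signals to zero makes every $F_\ell^{k}$ vanish. I would prove this by induction along a topological ordering of the DAG: a source has no in-neighbors, so $F=0$ trivially; for any other node $\ell$, \eqref{eq:sum_measure_node} gives $F_\ell^{k}=\sum_{p}f_{\ell,p}(u_p^{k-2}+F_p^{k-1})$, and with zero inputs the induction hypothesis yields $F_p^{k-1}=0$, so each argument is $0$ and each term equals $f_{\ell,p}(0)=0$ by the defining property $f(0)=0$ of $\Fone$. Hence $F_\ell^{k}=0$.

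With this in hand, to identify a fixed incoming edge $f_{i,j}$ I would set every input to zero except $u_j^{k-2}$. Then $F_\ell^{k-1}=0$ for all in-neighbors $\ell$ (recall $F_j^{k-1}$ does not depend on $u_j^{k-2}$ by the delay argument of Lemma~\ref{lemma:unique_functions}), so $\beta=0$, while every sibling term satisfies $f_{i,\ell}(0)=0$, so $\alpha=0$. The measured function therefore collapses to $F_i^k=f_{i,j}(u_j^{k-2})$. Sweeping $u_j^{k-2}$ over all of $\R$ recovers $f_{i,j}$ pointwise on its whole domain, so any other function in $\Fone$ consistent with the measurement must coincide with it; the edge is identified. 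Repeating the argument for each in-neighbor $j$ identifies all incoming edges of $i$.

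The main obstacle is the auxiliary vanishing claim: everything hinges on being able to simultaneously kill the additive offset $\alpha$ contributed by the sibling edges and the inner shift $\beta=F_j^{k-1}$, and this is precisely where the restriction to $\Fone$ (specifically $f(0)=0$) is essential—without it one could not guarantee $\beta=0$, which is exactly the obstruction exhibited for general nonlinear functions in Proposition~\ref{prop:path_general}. Note that analyticity plays no role in this argument; only $f(0)=0$ and the acyclicity of the graph are used.
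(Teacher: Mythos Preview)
Your proposal is correct and follows essentially the same approach as the paper: set every excitation to zero except $u_j^{k-2}$, use $f(0)=0$ together with Lemma~\ref{lemma:unique_functions} to collapse $F_i^k$ to $f_{i,j}(u_j^{k-2})$, and conclude that any competing $\tilde f_{i,j}\in\Fone$ must agree with $f_{i,j}$ on all of $\R$. The only difference is one of explicitness: the paper asserts in one line that ``since each function is in $\Fone$ and by Lemma~\ref{lemma:unique_functions}'' the measured function reduces to $f_{i,j}(u_j^{k-2})$, whereas you spell out the induction along a topological ordering showing that all $F_\ell^{k-1}$ vanish under zero inputs---a justification the paper leaves implicit.
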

\begin{proof}
Let us consider a node $i$ with $m$ incoming edges. The output of the node $i$ is given by:
\begin{equation}\label{eq:F_incoming_edges}
y_i^k=u_i^{k-1}+F_i^k(u_1^{k-2},\ldots,u_m^{k-2},\ldots),  
\end{equation}
where $F_i^k$ is determined by the set of functions $\{ f \}$ associated with the edges of the network. Let us assume that there exists another set of functions $\{ \tilde f \}\neq \{ f \}$ such that:
$$
F_i^k(u_1^{k-2},\ldots,u_m^{k-2},\ldots)=\tilde F_i^k(u_1^{k-2},\ldots,u_m^{k-2},\ldots),
$$
where $\tilde F_i^k$ is composed by the functions in the set $\{ \tilde f\}$.
Let us choose a point $(u_j^{k-2},0,\ldots,0)$ with $j=1,\ldots,m$, such that all the inputs are set to zero except one of the inputs of the incoming edges of the node $i$. Then, since each function is in $\Fone$ and by Lemma~\ref{lemma:unique_functions} we have:
$$
F_i^k=f_{i,j}(u_j^{k-2})\quad \text{and} \quad \tilde F_i^k=\tilde f_{i,j}(u_j^{k-2}).
$$
Since we assume that $F_i^k=\tilde F_i^k$,
it yields:
$$
f_{i,j}(u_j^{k-2})=\tilde f_{i,j}(u_j^{k-2}), \text{ for all } u_j^{k-2}\in\R,
$$
which implies that $f_{i,j}=\tilde f_{i,j}$. Following a similar argument for each incoming edge of node $i$, we prove that all the functions associated with the incoming edges of the node $i$ are unique and can be identified.
\end{proof}

Notice that due to other possible paths from an in-neighbor $j$ of the node $i$, additional terms of the form $u_j^{k-r}, r>2$ could appear in \eqref{eq:F_incoming_edges}. However, they will always be delayed by virtue of Lemma~\ref{lemma:unique_functions}.

The following lemmas involve properties of analytic functions that will be used in the proof of the results in this section.

\begin{lemma}(Periodic functions)\label{lemma:periodic}
    If for some $p_0 \in \R$, an analytic function $f:\R \to \R$ is periodic for periods $p\in[p_0-\epsilon,p_0+\epsilon]$ with $\epsilon>0$, then the function $f$ is constant.
\end{lemma}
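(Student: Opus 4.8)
The plan is to exploit the fact that possessing an entire interval of periods is far stronger than possessing a single period: it already forces $f$ to be constant on an interval, after which analyticity propagates the constancy to all of $\R$.

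First I would fix an arbitrary $x_0 \in \R$. By hypothesis, for every $p \in [p_0 - \epsilon, p_0 + \epsilon]$ we have $f(x_0 + p) = f(x_0)$. As $p$ ranges over this interval, the argument $x_0 + p$ sweeps out the whole interval $I := [x_0 + p_0 - \epsilon,\, x_0 + p_0 + \epsilon]$, which has positive length $2\epsilon$. Hence $f$ is constant, equal to $f(x_0)$, on all of $I$.

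Second, I would upgrade \emph{constant on $I$} to \emph{constant on $\R$} using analyticity. Since $f$ is constant on the open interval $(x_0 + p_0 - \epsilon,\, x_0 + p_0 + \epsilon)$, every derivative of order at least $1$ vanishes at any interior point $c$ of $I$. The Taylor series of $f$ at $c$ therefore reduces to the constant term $f(c) = f(x_0)$. Because $f$ is analytic on $\R$ in the sense adopted in this paper, namely its Taylor series at any point converges to $f$ for all $x \in \R$, this series equals $f(x)$ for every $x$, so $f(x) = f(x_0)$ for all $x$. Thus $f$ is constant. Equivalently, one can avoid the global convergence assumption and invoke the identity theorem for real-analytic functions on the connected domain $\R$: a real-analytic function that agrees with a constant on a set possessing an accumulation point must equal that constant everywhere.

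The argument has essentially no computational obstacle; the only points requiring care are the logical step that a \emph{continuum} of periods yields outright constancy on an interval (a much cleaner conclusion than the usual dense-orbit consequence of two incommensurate periods), and the clean appeal to the real-analytic unique continuation principle to pass from the interval $I$ to the whole line.
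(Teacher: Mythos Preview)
Your argument is correct and follows essentially the same route as the paper: fix a point, use the interval of periods to deduce that $f$ is constant on a nondegenerate interval, and then invoke analyticity to propagate constancy to all of $\R$. The only cosmetic difference is that the paper passes through $f'$ and the principle of isolated zeros, whereas you appeal directly to the Taylor expansion (or the identity theorem); both are equivalent unique-continuation arguments.
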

\begin{proof}
    The proof is left to Appendix~\ref{app:1}.
\end{proof}

\begin{lemma}\label{lemma:identification_one_function}
Given three non-zero analytic functions $f:\R\to\R$ and $g,\tilde g:\R^m\to \R$ satisfying $g(0)=\tilde{g}(0)=0$. If for all $x\in\R$, $y\in\R^m$, the functions $f$, $g$ and $\tilde g$ satisfy:
$$
f(x+g(y_1,\ldots,y_m))=f(x+\tilde{g}(y_1,\ldots,y_m)),
$$
then either $g=\tilde g$ or $f$ is constant.
\end{lemma}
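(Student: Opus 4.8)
The plan is to reduce the functional equation to a statement about the periodicity of $f$, and then invoke Lemma~\ref{lemma:periodic}. First I would fix $y\in\R^m$ and substitute $x\mapsto x-g(y)$ in the identity $f(x+g(y))=f(x+\tilde g(y))$. Since $x$ ranges over all of $\R$, the shifted variable $t:=x+g(y)$ also ranges over all of $\R$, so the identity becomes $f(t)=f(t+h(y))$ for every $t\in\R$, where $h(y):=\tilde g(y)-g(y)$. In other words, $h(y)$ is a period of $f$ for each $y\in\R^m$.

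Next I would assume $g\neq\tilde g$ and aim to conclude that $f$ is constant. Since $g\neq\tilde g$, there is some $y_0\in\R^m$ with $p_0:=h(y_0)\neq 0$, so $f$ admits at least one nonzero period. A single nonzero period is of course not enough to force constancy (think of $\sin$), so the crucial step is to exhibit a whole interval of periods. Here I would use that $g$ and $\tilde g$ are analytic, hence continuous, so $h=\tilde g-g$ is continuous, together with the normalization $h(0)=\tilde g(0)-g(0)=0$. Restricting $h$ to the segment $t\mapsto h(t\,y_0)$ for $t\in[0,1]$ gives a continuous scalar function taking the value $0$ at $t=0$ and the value $p_0$ at $t=1$; by the intermediate value theorem its image contains the entire interval between $0$ and $p_0$.

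Consequently every number in that interval is a period of $f$. Choosing any interior point $q\neq 0$ and a small $\epsilon>0$ with $[q-\epsilon,\,q+\epsilon]$ contained in the interval, I would conclude that $f$ is periodic for all periods $p\in[q-\epsilon,\,q+\epsilon]$. Lemma~\ref{lemma:periodic} then immediately forces $f$ to be constant, which completes the dichotomy.

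I expect the main obstacle to be precisely the passage from a single nonzero period to a continuum of periods: this is the only place where the continuity of $g,\tilde g$ and the boundary condition $g(0)=\tilde g(0)=0$ are genuinely needed, and it is what upgrades the conclusion from ``$f$ is periodic'' (which would be compatible with $f$ nonconstant) to ``$f$ is constant''. Everything else is a routine substitution and an application of the intermediate value theorem.
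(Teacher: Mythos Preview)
Your proposal is correct and follows essentially the same approach as the paper: assume $g\neq\tilde g$, deduce that $f$ has a whole interval of periods via the continuity of $\tilde g-g$ and the normalization $\tilde g(0)-g(0)=0$, then apply Lemma~\ref{lemma:periodic}. Your version is slightly more explicit in constructing the path $t\mapsto h(t\,y_0)$ for the intermediate value theorem and in selecting an interior subinterval $[q-\epsilon,q+\epsilon]$ to match the hypothesis of Lemma~\ref{lemma:periodic}, but the argument is the same.
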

\begin{proof}
    The proof is left to Appendix~\ref{app:2}.
\end{proof}

\begin{corollary}\label{corr:g_egal_g}
Under the same conditions as in Lemma~\ref{lemma:identification_one_function}, if $f(0)=0$, then
$$
g=\tilde{g}.
$$
\end{corollary}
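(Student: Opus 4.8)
The plan is to invoke Lemma~\ref{lemma:identification_one_function} verbatim and then discard one branch of its conclusion using the sole additional hypothesis $f(0)=0$. First I would check that the standing assumptions of the corollary are exactly those of the lemma: $f$, $g$, $\tilde g$ are non-zero analytic functions with $g(0)=\tilde g(0)=0$, and the functional identity $f(x+g(y))=f(x+\tilde g(y))$ holds for all $x\in\R$ and $y\in\R^m$. Since nothing has been weakened, the lemma applies directly and yields the dichotomy that either $g=\tilde g$ or $f$ is constant.

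The only thing left to do is to eliminate the second alternative. I would argue by contradiction: suppose $f$ were constant, say $f\equiv c$ for some $c\in\R$. Evaluating at the origin and using the hypothesis $f(0)=0$ gives $c=f(0)=0$, so $f$ would be identically zero. This contradicts the standing assumption of Lemma~\ref{lemma:identification_one_function} that $f$ is a non-zero function. Hence the constant branch is impossible, and the dichotomy collapses to $g=\tilde g$, which is the claim.

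The main (and essentially only) point worth stating is that the extra condition $f(0)=0$, combined with $f$ being non-zero, forces $f$ to be non-constant, thereby rendering the ``$f$ constant'' escape of Lemma~\ref{lemma:identification_one_function} vacuous. There is no genuine analytic obstacle in this corollary: all of the substantive work, involving periodicity of analytic functions via Lemma~\ref{lemma:periodic}, is already carried out in the proof of Lemma~\ref{lemma:identification_one_function}, and the corollary is a one-line deduction from it.
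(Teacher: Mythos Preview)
Your proposal is correct and matches the paper's intent: the corollary is stated without proof precisely because it is the one-line deduction you give---Lemma~\ref{lemma:identification_one_function} yields $g=\tilde g$ or $f$ constant, and $f(0)=0$ together with $f\not\equiv 0$ rules out the constant branch.
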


\vspace{2mm}

\begin{proposition}[Paths]\label{prop:path_zero_function}
    For identifiability of a path graph in the class $\Fone$, it is necessary and sufficient to measure the sink.
\end{proposition}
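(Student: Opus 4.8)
The plan is to handle necessity and sufficiency separately. Necessity is immediate: in a path graph $1\to 2\to\cdots\to n$ the node $n$ is the unique sink, and since the last edge $f_{n,n-1}$ is its only incoming edge, Proposition~\ref{prop:sinks_sources} already forces $n$ to be measured. The substance of the statement is therefore the sufficiency direction, namely that measuring the single node $n$ identifies \emph{every} edge $f_{n,n-1},f_{n-1,n-2},\ldots,f_{2,1}$.

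For sufficiency I would set up a ``peeling'' induction that strips the edge functions off one at a time from the outside inward. The measured function has the telescoping form
$$
F_n^k = f_{n,n-1}\!\left(u_{n-1}^{k-2} + F_{n-1}^{k-1}\right),
$$
and at every deeper level $F_j = f_{j,j-1}(u_{j-1}+F_{j-1})$, where each inner function $F_{j-1}$ depends only on strictly more-delayed inputs and, being a composition of $\Fone$ functions, satisfies $F_{j-1}(0)=0$. First I would apply Lemma~\ref{lemma:sinks} to the sink to get $f_{n,n-1}=\tilde f_{n,n-1}$ for any competing admissible set $\{\tilde f\}$. With the outer function now common to both sides, the equality $f_{n,n-1}(u_{n-1}^{k-2}+F_{n-1}^{k-1})=f_{n,n-1}(u_{n-1}^{k-2}+\tilde F_{n-1}^{k-1})$ is exactly the hypothesis of Corollary~\ref{corr:g_egal_g}: the inner maps vanish at the origin and $f_{n,n-1}$ is a nonzero element of $\Fone$. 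Hence $F_{n-1}^{k-1}=\tilde F_{n-1}^{k-1}$, so the inner function is pinned down.

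The observation that drives the recursion is that a determined $F_{n-1}^{k-1}$ carries exactly the same information as an actual measurement of node $n-1$ (up to a uniform delay shift). I would therefore iterate: from $F_j$ determined, Lemma~\ref{lemma:sinks} --- applied as in its proof, by zeroing all deeper inputs so that $F_{j-1}=0$ and $F_j=f_{j,j-1}(u_{j-1})$ --- identifies $f_{j,j-1}$, and then Corollary~\ref{corr:g_egal_g} upgrades this to $F_{j-1}=\tilde F_{j-1}$. Running this from $j=n$ down to $j=2$ identifies every edge in turn, the recursion terminating at the source $j=1$, which has no incoming edges and contributes $F_1\equiv 0$. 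Combined with necessity, this proves the claim.

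The main obstacle I anticipate lies not in the architecture but in verifying, at each level of the recursion, that the hypotheses of Corollary~\ref{corr:g_egal_g} genuinely hold: that the outer edge function is a nonzero element of $\Fone$ (so that the degenerate ``$f$ constant'' alternative of Lemma~\ref{lemma:identification_one_function} is excluded) and that the accumulated inner map still vanishes at $0$. Both follow from the defining properties of $\Fone$, and this is exactly what makes the smaller class essential: it is the absence of a constant component --- which rules out the $\gamma$-shift ambiguity of Fig.~\ref{fig:path_graph} and Proposition~\ref{prop:path_general} --- that lets the peeling go through. The delay bookkeeping hidden in the superscripts is a notational nuisance but poses no real difficulty.
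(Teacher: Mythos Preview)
Your proposal is correct and follows essentially the same approach as the paper: use Lemma~\ref{lemma:sinks} at the sink to pin down the outermost edge, then invoke Corollary~\ref{corr:g_egal_g} to conclude $F_{n-1}^{k-1}=\tilde F_{n-1}^{k-1}$, and iterate this peeling step down the path. Your explicit check that the inner maps vanish at the origin and that the outer edge is a nonzero element of $\Fone$ is exactly what the paper relies on implicitly when applying Corollary~\ref{corr:g_egal_g}.
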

\begin{proof}
Let us consider a path with $n$ nodes. The measurement of the sink gives us the output:
\begin{align}
    y_n^k&=u_n^{k-1}+F_n^k\nonumber\\
    &=u_n^{k-1}+f_{n,n-1}(u_{n-1}^{k-2}+F_{n-1}^{k-1})\label{eq:sink_path}. 
\end{align}
Let us assume that there is a set $\{ \tilde f\}\neq \{ f\}$ such that $F_n^k=\tilde F_n^k$, which  by \eqref{eq:sink_path} implies:
$$
f_{n,n-1}(u_{n-1}^{k-2}+F_{n-1}^{k-1})=\tilde f_{n,n-1}(u_{n-1}^{k-2}+\tilde F_{n-1}^{k-1}).
$$
By Lemma~\ref{lemma:sinks}, we can guarantee that $f_{n,n-1}=\tilde f_{n,n-1}$, and we have:
$$
f_{n,n-1}(u_{n-1}^{k-2}+F_{n-1}^{k-1})= f_{n,n-1}(u_{n-1}^{k-2}+\tilde F_{n-1}^{k-1}).
$$
Then, we use Corollary~\ref{corr:g_egal_g} to guarantee that $F_{n-1}^{k-1}=\tilde F_{n-1}^{k-1}$. Notice that now the identifiability problem is equivalent to having measured the node $n-1$ and by following a similar approach, we can continue with the identification of all the edges and guarantee that $\{ f \}=\{ \tilde f \}$, such that all the path can be identified.
\end{proof}

\begin{proposition}[Trees]\label{corr:trees}
For identifiability of a tree in the class $\Fone$, it is necessary and sufficient to measure all the sinks.
\end{proposition}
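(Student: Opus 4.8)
The plan is to obtain necessity directly from Proposition~\ref{prop:sinks_sources} and to prove sufficiency by a backward sweep from the sinks through the tree, reusing the recursion of the path case but decoupling the contributions of distinct subtrees. Necessity is immediate: Proposition~\ref{prop:sinks_sources} already asserts that measuring every sink is necessary for identifiability of any network, hence in particular for a tree.

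For sufficiency I would argue by induction on the longest directed path from a node to a sink. Call a node $i$ \emph{known} once we have determined its function $F_i^k$; the sinks are known by direct measurement. Suppose $i$ is known and has in-neighbors $j_1,\dots,j_m$, so that by \eqref{eq:sum_measure_node}
$$
F_i^k=\sum_{\ell=1}^m f_{i,j_\ell}(u_{j_\ell}^{k-2}+F_{j_\ell}^{k-1}).
$$
First, Lemma~\ref{lemma:sinks} applied to $i$ identifies each incoming edge $f_{i,j_\ell}$. Next I want to recover each $F_{j_\ell}^{k-1}$, which is exactly the information one would obtain by measuring $j_\ell$. Here the tree structure is essential: since there is a unique path between any two nodes, the ancestor sets of $j_1,\dots,j_m$ are pairwise disjoint, so each excitation signal enters $F_i^k$ through a single summand. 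Setting every input outside the ancestor set of $j_\ell$ to zero therefore makes all the other summands vanish, because every function lies in $\Fone$ and hence takes the value $0$ at $0$; by the same reasoning $F_{j_\ell}^{k-1}$ is an analytic map vanishing at the origin.

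With the remaining terms eliminated, for a competing set $\{\tilde f\}$ producing the same $F_i^k$ we are left with
$$
f_{i,j_\ell}(u_{j_\ell}^{k-2}+F_{j_\ell}^{k-1})=f_{i,j_\ell}(u_{j_\ell}^{k-2}+\tilde F_{j_\ell}^{k-1}),
$$
where $f_{i,j_\ell}$ is nonzero by Assumption~\ref{ass:full_excitation}. Corollary~\ref{corr:g_egal_g} then yields $F_{j_\ell}^{k-1}=\tilde F_{j_\ell}^{k-1}$, i.e. $j_\ell$ becomes known. Since every node of a DAG has a directed path to some sink, this recursion reaches every node, and each edge $f_{i,j}$ is identified at the step where its head $i$ becomes known; this closes the induction and proves that measuring all sinks suffices. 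The main obstacle, and the only place where the tree hypothesis is used in a non-trivial way, is precisely this decoupling argument: in a general DAG a single input may feed several summands (as in Fig.~\ref{fig:DAG_Fone}), so the clean isolation of one term and the reduction to the path argument via Corollary~\ref{corr:g_egal_g} would fail. I would therefore take care to state explicitly that the disjointness of ancestor sets is what makes the zeroing of inputs eliminate all but one term.
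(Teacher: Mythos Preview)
Your proposal is correct and follows essentially the same approach as the paper: necessity via Proposition~\ref{prop:sinks_sources}, then a backward sweep from each sink that (i) identifies the incoming edges via Lemma~\ref{lemma:sinks}, (ii) uses the disjointness of the branches in a tree to zero out all but one subtree, and (iii) applies Corollary~\ref{corr:g_egal_g} to recover $F_{j_\ell}^{k-1}$, recursing inward. Your explicit induction on distance-to-sink and your remark on why the decoupling fails for general DAGs are a slightly more careful packaging of the same argument the paper gives.
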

\begin{proof}
    From Proposition~\ref{prop:sinks_sources}, it is necessary to measure all the sinks. Let us consider an arbitrary tree and the measurement of a sink $i$. Let us assume that there are $m$ in-neighbors of the sink $i$ and there is a set $\{ \tilde f\}\neq \{ f \}$ such that $F_i^{k}=\tilde F_i^{k}$, which implies:
    \begin{equation}\label{eq:branches_tree}
        \sum_{\ell=1}^mf_{i,\ell}(u_\ell^{k-2}+F_\ell^{k-1})=\sum_{\ell=1}^m\tilde f_{i,\ell}(u_\ell^{k-2}+ \tilde F_\ell^{k-1}).
    \end{equation}
    Since in a tree, the functions $F_\ell^{k-1}$ do not have common inputs because they come from different branches, we can select a in-neighbor $j$ and set to zero the inputs of all the nodes that do not have a path to $j$, such that we have:
    $$
    f_{i,j}(u_j^{k-2}+F_j^{k-1})=\tilde f_{j,j}(u_j^{k-2}+\tilde F_j^{k-1}).
    $$
    Then, by using Lemma~\ref{lemma:sinks} and Corollary~\ref{corr:g_egal_g} we can guarantee that $f_{i,j}=\tilde f_{i,j}$ and $F_j^{k-1}=\tilde F_j^{k-1}$ for all $j=1,\ldots,m$, which is equivalent to having measured the in-neighbors of $i$. Then, we can continue with the identification of each branch independently and by following the same approach we can identify all the paths that finish in the sink $i$. Finally, by measuring the other sinks and following a similar approach, we can identify all the edges in the tree.
\end{proof}

\begin{remark}[Linear functions]
Notice that Propositions~\ref{prop:path_zero_function} and \ref{corr:trees} are also valid if all or some of the edges in the network contain pure linear functions. In the next section, we will provide stronger results in the identification of nonlinear networks when linear functions are excluded.   
\end{remark}

\section{Directed acyclic graphs}

Directed acyclic graphs encompass a large number of graph topologies that present specific characteristics that can be used for the derivation of conditions for identifiability~\cite{mapurunga2022excitation}.
Unlike a tree, in a directed acyclic graph, 
the functions $F_{\ell}^{k-1}$ in \eqref{eq:branches_tree} can have common variables due to several possible paths between two nodes with the same length, which makes impossible the application of Corollary~\ref{corr:g_egal_g}. In order to obtain a result similar to Corollary~\ref{corr:g_egal_g} that allow us to identify a directed acyclic graph, we consider a smaller class of functions.

\begin{definition}[Class of functions $\Ftwo$]
Let $\Ftwo$ be the class of functions $f:\R\to\R$ with the following properties:
\begin{enumerate}
    \item $f$ is analytic in $\R$.
    \item $f(0)=0$.
    \item The associated Taylor series $f(x)=\sum_{n=1}^\infty a_nx^n$ contains at least one coefficient $a_n\neq 0$ with $n>1$.    
\end{enumerate}
\end{definition}

The third property of the functions in $\Ftwo$ implies that none of the functions is linear.

Clearly $\Ftwo$ is a subclass of $\Fone$ and all the results of the previous section for functions in $\Fone$ are also valid for functions in $\Ftwo$.

\begin{lemma}\label{lemma:identification_sum_functions}
Given the non-zero analytic functions $f_i:\R\to\R$ and $g_i,\tilde g_i:\R^m\to \R$ satisfying  $f_i(0)=g_i(0)=\tilde{g}_i(0)=0$ for $i=1,\ldots,n$. Let us assume that none of the functions $f_i$ is linear. If for all $x\in\R$, $y\in\R^m$, the functions $f_i$, $g_i$ and $\tilde g_i$ satisfy:
$$
\sum_{i=1}^nf_i(x_i+g_i(y_1,\ldots,y_m))=\sum_{i=1}^nf_i(x_i+\tilde{g}_i(y_1,\ldots,y_m)),
$$
then $g_i=\tilde g_i$ for all $i=1,\ldots,n$.
\end{lemma}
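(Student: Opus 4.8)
The plan is to reduce the $n$-term identity to $n$ separate one-variable periodicity statements by freezing the shared variables and exploiting the independence of the $x_i$. Concretely, I would fix an arbitrary $\bar y\in\R^m$ and write $c_i:=g_i(\bar y)$ and $\tilde c_i:=\tilde g_i(\bar y)$, so that the hypothesis becomes $\sum_{i=1}^n f_i(x_i+c_i)=\sum_{i=1}^n f_i(x_i+\tilde c_i)$ for all $(x_1,\dots,x_n)\in\R^n$. Since each summand on either side depends on a single $x_i$, setting all variables but $x_j$ to zero yields $f_j(x_j+c_j)-f_j(x_j+\tilde c_j)=K_j$ for a constant $K_j$ independent of $x_j$; differentiating in $x_j$ gives $f_j'(x_j+c_j)=f_j'(x_j+\tilde c_j)$ for all $x_j\in\R$. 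Equivalently, after the substitution $u=x_j+c_j$, the analytic function $f_j'$ admits the period $p_j(\bar y):=\tilde c_j-c_j=\tilde g_j(\bar y)-g_j(\bar y)$.

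Next I would let $\bar y$ vary. The map $h_j:=\tilde g_j-g_j$ is analytic, hence continuous, on the connected domain $\R^m$, and the previous step shows that $h_j(\bar y)$ is a period of $f_j'$ for every $\bar y$. Suppose, for contradiction, that $h_j$ is not identically constant. Then its image $h_j(\R^m)$ is a nondegenerate interval, hence contains a subinterval $[p_0-\epsilon,p_0+\epsilon]$ with $\epsilon>0$, and every number in this interval is a period of $f_j'$. By Lemma~\ref{lemma:periodic}, $f_j'$ is then constant, so $f_j$ is affine, and since $f_j(0)=0$ it is in fact linear, contradicting the hypothesis that no $f_i$ is linear. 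Therefore $h_j$ is constant, and evaluating at the origin, where $h_j(0)=\tilde g_j(0)-g_j(0)=0$, forces $h_j\equiv 0$, i.e. $g_j=\tilde g_j$. As $j$ is arbitrary, this gives $g_i=\tilde g_i$ for all $i$.

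The main obstacle is the passage from ``$f_j'$ has the single period $p_j(\bar y)$ for each fixed $\bar y$'' to a genuine constraint: a lone nonzero period (as for $\sin$) is perfectly compatible with a nonconstant analytic function, so freezing $\bar y$ is not enough on its own. The leverage comes precisely from letting $\bar y$ range over the connected set $\R^m$, so that continuity of $h_j$ turns a nonconstant period map into a whole interval of periods, which is exactly the hypothesis of Lemma~\ref{lemma:periodic} and is incompatible with $f_j$ being nonlinear. A secondary point to handle with care is the decoupling of the summands: this relies on the $x_i$ being independent free variables and on the fact that, once $\bar y$ is fixed, the inner functions contribute only additive constants, so that differentiation in $x_j$ cleanly isolates the $j$-th edge.
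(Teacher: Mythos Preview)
Your proof is correct and follows essentially the same route as the paper: isolate the $j$-th term by differentiating in $x_j$ to obtain $f_j'(x_j+g_j(y))=f_j'(x_j+\tilde g_j(y))$, then use continuity of $\tilde g_j-g_j$ together with Lemma~\ref{lemma:periodic} to force $f_j'$ constant (hence $f_j$ linear) unless $g_j=\tilde g_j$. The paper packages the second step as a direct application of Lemma~\ref{lemma:identification_one_function}, whereas you inline that periodicity argument, but the content is identical.
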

\begin{proof}
The proof is left to Appendix~\ref{app:3}.
\end{proof}

Notice that when $n=1$, Lemma~\ref{lemma:identification_sum_functions} is also covered by Corollary~\ref{corr:g_egal_g}.

\begin{proposition}\label{prop:path_2_degree}
For the functions in $\Ftwo$, in a directed acyclic graph, the measurement of a node provides the identification of all the nonlinear functions of any path that finishes in the measured node.
\end{proposition}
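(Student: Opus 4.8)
The plan is to mirror the inductive ``peeling'' structure of the proofs of Propositions~\ref{prop:path_zero_function} and \ref{corr:trees}, but to replace the branch-separation step (which relied on Corollary~\ref{corr:g_egal_g} and fails in a DAG because the inner functions share inputs) by a single application of Lemma~\ref{lemma:identification_sum_functions} to the whole sum over in-neighbors. First I would write the measured function at node $i$ in the decomposed form
$$
F_i^k=\sum_{\ell=1}^m f_{i,\ell}\!\left(u_\ell^{k-2}+F_\ell^{k-1}\right),
$$
where $\ell$ ranges over the in-neighbors of $i$, and suppose a second set $\{\tilde f\}$ of $\Ftwo$-functions produces the same $F_i^k$. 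By Lemma~\ref{lemma:sinks} the incoming edges are already pinned down, so $f_{i,\ell}=\tilde f_{i,\ell}$ for every $\ell$, and the identity $F_i^k=\tilde F_i^k$ collapses to
$$
\sum_{\ell=1}^m f_{i,\ell}\!\left(u_\ell^{k-2}+F_\ell^{k-1}\right)=\sum_{\ell=1}^m f_{i,\ell}\!\left(u_\ell^{k-2}+\tilde F_\ell^{k-1}\right).
$$

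Next I would check that this identity is exactly of the form required by Lemma~\ref{lemma:identification_sum_functions}, with $f_\ell:=f_{i,\ell}$, private variables $x_\ell:=u_\ell^{k-2}$, and $g_\ell:=F_\ell^{k-1}$, $\tilde g_\ell:=\tilde F_\ell^{k-1}$ viewed as functions of the remaining (possibly shared) upstream inputs. The hypotheses to verify are: each $f_{i,\ell}$ is non-zero analytic and non-linear (immediate from membership in $\Ftwo$); each $F_\ell^{k-1}$ is analytic (a finite composition of analytic edge functions) and vanishes at the origin (setting all inputs to zero makes every output zero, since $f(0)=0$ along each edge); and, crucially, that $u_\ell^{k-2}$ is genuinely private, i.e. it occurs in no $F_{\ell'}^{k-1}$. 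This last point is where the delay bookkeeping does the real work: by Lemma~\ref{lemma:unique_functions} $u_\ell^{k-2}$ is absent from $F_\ell^{k-1}$, and for $\ell'\neq\ell$ any path from $\ell$ to $i$ avoiding the direct edge has length at least two, so $u_\ell$ enters $y_i^k$ through such a path only with delay at least $3$, whereas $F_{\ell'}^{k-1}$ carries the inputs $u_\ell^{k-3},u_\ell^{k-4},\dots$; hence $u_\ell^{k-2}$ cannot appear in $F_{\ell'}^{k-1}$ either. This separation of the private variables, despite the $F_\ell^{k-1}$ sharing their remaining arguments, is precisely what was impossible in the tree argument and is the main obstacle to get right. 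Lemma~\ref{lemma:identification_sum_functions} then yields $F_\ell^{k-1}=\tilde F_\ell^{k-1}$ for every in-neighbor $\ell$.

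Finally I would set up the backward induction. Knowing $F_\ell^{k-1}$ is, up to a uniform delay, the same information as having measured node $\ell$, so the preceding argument applies verbatim there: Lemma~\ref{lemma:sinks} identifies the edges incoming to $\ell$, and Lemma~\ref{lemma:identification_sum_functions} identifies the functions $F_p^{k-2}$ of the in-neighbors $p$ of $\ell$. Inducting on the longest path length into $i$ in the finite acyclic graph, the recursion propagates back to the sources, where $F\equiv 0$ and nothing remains to identify, while at every step it fixes the edges incoming to the current node. Since every edge lying on a path that ends at $i$ is incoming to some node reached in this backward sweep, all such edges are identified, which is the claim.
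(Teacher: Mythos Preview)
Your proposal is correct and follows essentially the same route as the paper: decompose $F_i^k$ over in-neighbors, use Lemma~\ref{lemma:sinks} to pin down the incoming edges, invoke Lemma~\ref{lemma:identification_sum_functions} to conclude $F_\ell^{k-1}=\tilde F_\ell^{k-1}$ for every in-neighbor $\ell$, and then recurse. You are more explicit than the paper in checking the hypotheses of Lemma~\ref{lemma:identification_sum_functions} (analyticity and vanishing at the origin of the inner functions, and the privacy of each $u_\ell^{k-2}$ via the delay structure already noted in the proof of Lemma~\ref{lemma:unique_functions}) and in phrasing the recursion as an induction on the longest path length into $i$, but the argument is the same.
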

\begin{proof}
Let us assume an arbitrary directed acyclic graph. The measurement of a node $i$ provides an output of the type:
\begin{align*}
    y_i^k&=u_i^{k-1}+F_i^k\\
    &=u_i^{k-1}+\sum_{j=1}^mf_{i,j}(u_j^{k-2}+F_j^{k-1}),
\end{align*}
where $m$ is the number of in-neighbors of $i$. Let us assume that there is a set $\{ \tilde f\}\neq \{ f\}$ such that $F_i^k=\tilde F_i^k$, which implies:
$$
\sum_{j=1}^mf_{i,j}(u_j^{k-2}+F_j^{k-1})=\sum_{j=1}^m\tilde f_{i,j}(u_j^{k-2}+\tilde F_j^{k-1}),
$$
By applying Lemma~\ref{lemma:sinks}
we have $f_{i,j}=\tilde f_{i,j}$ for all $j=1,\ldots,m$ and:
$$
\sum_{j=1}^mf_{i,j}(u_j^{k-2}+F_j^{k-1})=\sum_{j=1}^mf_{i,j}(u_j^{k-2}+\tilde F_j^{k-1}),
$$
and by using Lemma~\ref{lemma:identification_sum_functions} we guarantee:
$$
F_j^{k-1}=\tilde F_j^{k-1} \quad \text{for all }j=1,\ldots,m.
$$
Notice that the identification of each $F_j^{k-1}$ is equivalent to having measured the node $j$ and can be treated in a similar way to the node $i$, independently of other paths corresponding to the other in-neighbors of $i$.  By following a similar approach, we can guarantee that $\{ f \}=\{ \tilde f \}$ for every path that ends in the node $i$.
\end{proof}

\begin{theorem}[Directed acyclic graph]\label{thm:DAG}
For identifiability of a directed acyclic graph in the class $\Ftwo$, it is necessary and sufficient to measure all the sinks.
\end{theorem}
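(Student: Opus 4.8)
The plan is to prove the two directions separately. Necessity is immediate: Proposition~\ref{prop:sinks_sources} already establishes that measuring every sink is necessary for identifiability of any network, and a directed acyclic graph is no exception, so I would simply invoke it. All the substance lies in establishing sufficiency, namely that once every sink is measured, every edge of the graph is identifiable in the class $\Ftwo$.

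For sufficiency the key engine is Proposition~\ref{prop:path_2_degree}, which guarantees that measuring a single node identifies all functions lying on any directed path terminating at that node. Applied to each measured sink $s$, this yields identification of $f_{i,j}$ for every edge from $j$ to $i$ that occurs on some directed path ending at $s$. Hence the sufficiency claim reduces to a purely combinatorial statement about directed acyclic graphs: that every edge of the graph lies on at least one directed path whose terminal node is a sink.

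I would establish this covering claim as follows. Take an arbitrary edge from $j$ to $i$ in $E$. Because the graph is finite and acyclic, any directed walk starting at $i$ cannot repeat a vertex, so it has bounded length and can be extended until it reaches a node with no outgoing edge, i.e.\ a sink $s$. Prepending the edge from $j$ to $i$ to this path produces a directed path through that edge and terminating at the measured sink $s$. By Proposition~\ref{prop:path_2_degree} every function on this path is identified, in particular $f_{i,j}$. Since the chosen edge was arbitrary, all edges are identifiable, and by Definition~\ref{def:network_id} the directed acyclic graph is identifiable.

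The main obstacle is not in this theorem itself: here the argument is a short assembly of Proposition~\ref{prop:path_2_degree} with the elementary observation that every edge extends to a sink. The genuine difficulty has already been absorbed into the supporting results, most notably Lemma~\ref{lemma:identification_sum_functions}, which is where the nonlinearity assumption (some Taylor coefficient $a_n\neq 0$ with $n>1$) is used to disentangle the common-variable couplings arising from multiple paths of equal length between two nodes, precisely the situation that defeats the tree argument of Proposition~\ref{corr:trees} based on Corollary~\ref{corr:g_egal_g}. I would therefore take care to confirm that the backward propagation in Proposition~\ref{prop:path_2_degree} proceeds without gaps along the whole path to the sink: its proof recursively reduces identification at a node to identification of $F_j^{k-1}=\tilde F_j^{k-1}$ at each in-neighbor, so chaining from the sink back through the edge to its tail is legitimate irrespective of how many alternative paths share intermediate nodes.
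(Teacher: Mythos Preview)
Your proposal is correct and follows essentially the same approach as the paper: necessity via Proposition~\ref{prop:sinks_sources}, and sufficiency by combining Proposition~\ref{prop:path_2_degree} with the elementary fact that in a finite directed acyclic graph every node (hence every edge, by prepending) admits a directed path to some sink. Your version is in fact slightly more explicit than the paper's, which simply cites the node-to-sink reachability fact and leaves the edge-covering step implicit.
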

\begin{proof}
From Proposition~\ref{prop:sinks_sources}, it is necessary to measure all the sinks. In a directed acyclic graph, we can always find a path from any node $i$ to some sink \cite{bang2008digraphs}. Therefore, according to Proposition~\ref{prop:path_2_degree}, it is sufficient to measure the sinks to identify all the paths in a directed acyclic graph.
\end{proof}

Unlike the linear case, where the measurement of the sinks is not enough to guarantee identifiability of directed acyclic graphs \cite{hendrickx2019identifiability}, Theorem~\ref{thm:DAG} provides weaker conditions for the identifiability in the nonlinear case when linear functions are excluded. 

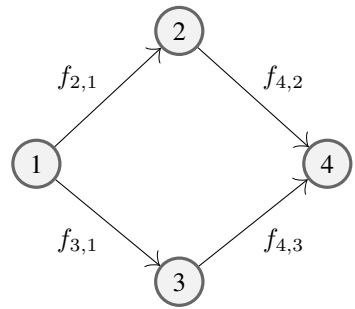
\begin{figure}
    \centering

\vspace{2mm}
    
    \begin{tikzpicture}
    [
roundnodes/.style={circle, draw=black!60, fill=black!5, very thick, minimum size=1mm},roundnode/.style={circle, draw=white!60, fill=white!5, very thick, minimum size=1mm}
]
\node[roundnodes](node1){1};
\node[roundnodes](node2)[above=of node1,yshift=1mm,xshift=1.9cm]{2};
\node[roundnodes](node4)[right=3.2cm of node1]{4};
\node[roundnodes](node3)[below=of node1,yshift=1mm,xshift=1.9cm]{3};

\draw[-{Classical TikZ Rightarrow[length=1.5mm]}] (node1) to node [left,swap,yshift=2.5mm] {$f_{2,1}$} (node2);
\draw[-{Classical TikZ Rightarrow[length=1.5mm]}] (node2) to node [right,swap,yshift=2.5mm] {$f_{4,2}$} (node4);
\draw[-{Classical TikZ Rightarrow[length=1.5mm]}] (node1) to node [left,swap,yshift=-2.5mm] {$f_{3,1}$} (node3);
\draw[-{Classical TikZ Rightarrow[length=1.5mm]}] (node3) to node [right,swap,yshift=-2.5mm] {$f_{4,3}$} (node4);
\end{tikzpicture}
    \caption{Nonlinear network that can be identified by only measuring the sink. In the linear case, the measurement of the sink is not enough to identify this network.}
    \label{fig:bridge_graph}
\end{figure}

\begin{example}[Directed acyclic graph] 
Let us consider the graph in Fig.~\ref{fig:bridge_graph}. In the linear case, this network cannot be identified by only measuring the sink since the functions $f_{2,1}$ and $f_{3,1}$ cannot be distinguished. However, in the nonlinear case, the measurement of the sink is enough to identify all the network.

\end{example}

\section{Conclusions and future work}

We have derived identifiability conditions for a network characterized by nonlinear interactions through a static model. We showed that in a path graph it is necessary to measure all the nodes, except by the source, when the nonlinear functions have a static component. Then, by restricting the identifiability problem to a specific class of functions, we showed that the measurement of the sinks is necessary and sufficient to identify all the edges in paths and trees. Finally, by considering a smaller class of functions, we showed that the measurement of the sinks is necessary and sufficient for the identifiability of directed acyclic graphs. This simple model of nonlinear interactions allowed us to highlight fundamental differences with respect to the 
linear case.

For future work, it would be interesting to extend the results to the case of general digraphs with cycles where the function $F_i^k$ in \eqref{eq:function_Fi} depends on an infinite number of inputs. Also, it would be important to consider dynamical models that include past inputs. In this case, the Volterra series seems to be the more adequate model since it only depends on past inputs, and many of our results could still hold.

\bibliographystyle{IEEEtran}
\bibliography{CDC}

\begin{thebibliography}{10}
\providecommand{\url}[1]{#1}
\csname url@samestyle\endcsname
\providecommand{\newblock}{\relax}
\providecommand{\bibinfo}[2]{#2}
\providecommand{\BIBentrySTDinterwordspacing}{\spaceskip=0pt\relax}
\providecommand{\BIBentryALTinterwordstretchfactor}{4}
\providecommand{\BIBentryALTinterwordspacing}{\spaceskip=\fontdimen2\font plus
\BIBentryALTinterwordstretchfactor\fontdimen3\font minus
  \fontdimen4\font\relax}
\providecommand{\BIBforeignlanguage}[2]{{%
\expandafter\ifx\csname l@#1\endcsname\relax
\typeout{** WARNING: IEEEtran.bst: No hyphenation pattern has been}%
\typeout{** loaded for the language `#1'. Using the pattern for}%
\typeout{** the default language instead.}%
\else
\language=\csname l@#1\endcsname
\fi
#2}}
\providecommand{\BIBdecl}{\relax}
\BIBdecl

\bibitem{bullo2022lectures}
F.~Bullo, \emph{Lectures on Network Systems}, {1.6}~ed.\hskip 1em plus 0.5em
  minus 0.4em\relax Kindle Direct Publishing, 2022.

\bibitem{ramaswamy2019generalized}
K.~R. Ramaswamy, P.~M. J.~Van~den Hof, and A.~G. Dankers, ``Generalized sensing
  and actuation schemes for local module identification in dynamic networks,''
  in \emph{2019 IEEE 58th Conference on Decision and Control (CDC)}, 2019, pp.
  5519--5524.

\bibitem{bombois2023informativity}
X.~Bombois, K.~Colin, P.~M.~J. Van~den Hof, and H.~Hjalmarsson, ``On the
  informativity of direct identification experiments in dynamical networks,''
  \emph{Automatica}, vol. 148, p. 110742, 2023.

\bibitem{kivits2023identifiability}
E.~M.~M. Kivits and P.~M.~J. Van~den Hof, ``Identifiability of diffusively
  coupled linear networks with partial instrumentation,'' in \emph{22nd World
  Congress of the International Federation of Automatic Control (IFAC 2023
  World Congress)}, 2023.

\bibitem{hendrickx2019identifiability}
J.~M. Hendrickx, M.~Gevers, and A.~S. Bazanella, ``Identifiability of dynamical
  networks with partial node measurements,'' \emph{IEEE Transactions on
  Automatic Control}, vol.~64, no.~6, pp. 2240--2253, 2019.

\bibitem{legat2020local}
A.~Legat and J.~M. Hendrickx, ``Local network identifiability with partial
  excitation and measurement,'' in \emph{2020 59th IEEE Conference on Decision
  and Control (CDC)}, 2020, pp. 4342--4347.

\bibitem{legat2021path}
------, ``Path-based conditions for local network identifiability,'' in
  \emph{2021 60th IEEE Conference on Decision and Control (CDC)}, 2021, pp.
  3024--3029.

\bibitem{bazanella2019network}
A.~S. Bazanella, M.~Gevers, and J.~M. Hendrickx, ``Network identification with
  partial excitation and measurement,'' in \emph{2019 IEEE 58th Conference on
  Decision and Control (CDC)}, 2019, pp. 5500--5506.

\bibitem{cheng2023necessary}
X.~Cheng, S.~Shi, I.~Lestas, and P.~M.~J. Van~den Hof, ``A necessary condition
  for network identifiability with partial excitation and measurement,''
  \emph{IEEE Transactions on Automatic Control}, 2023.

\bibitem{dorfler2014synchronization}
F.~D{\"o}rfler and F.~Bullo, ``Synchronization in complex networks of phase
  oscillators: A survey,'' \emph{Automatica}, vol.~50, no.~6, pp. 1539--1564,
  2014.

\bibitem{pan2012reconstruction}
W.~Pan, Y.~Yuan, J.~Gon{\c{c}}alves, and G.-B. Stan, ``Reconstruction of
  arbitrary biochemical reaction networks: A compressive sensing approach,'' in
  \emph{2012 IEEE 51st IEEE Conference on Decision and Control (CDC)}.\hskip
  1em plus 0.5em minus 0.4em\relax IEEE, 2012, pp. 2334--2339.

\bibitem{aalto2020gene}
A.~Aalto, L.~Viitasaari, P.~Ilmonen, L.~Mombaerts, and J.~Gon{\c{c}}alves,
  ``Gene regulatory network inference from sparsely sampled noisy data,''
  \emph{Nature communications}, vol.~11, no.~1, p. 3493, 2020.

\bibitem{bizyaeva2023nonlinear}
A.~Bizyaeva, A.~Franci, and N.~E. Leonard, ``Nonlinear opinion dynamics with
  tunable sensitivity,'' \emph{IEEE Transactions on Automatic Control},
  vol.~68, no.~3, pp. 1415--1430, 2023.

\bibitem{janczak2004identification}
A.~Janczak, \emph{Identification of Nonlinear Systems Using Neural Networks and
  Polynomial Models}.\hskip 1em plus 0.5em minus 0.4em\relax Springer, 2004.

\bibitem{nelles2020nonlinear}
O.~Nelles, \emph{Nonlinear System Identification}.\hskip 1em plus 0.5em minus
  0.4em\relax Springer, 2020.

\bibitem{paduart2010identification}
J.~Paduart, L.~Lauwers, J.~Swevers, K.~Smolders, J.~Schoukens, and R.~Pintelon,
  ``Identification of nonlinear systems using polynomial nonlinear state space
  models,'' \emph{Automatica}, vol.~46, no.~4, pp. 647--656, 2010.

\bibitem{aggarwal2018neural}
C.~C. Aggarwal, ``{Neural Networks and Deep Learning},'' \emph{Springer}, 2018.

\bibitem{abramowitz1965handbook}
M.~Abramowitz and I.~Stegun, \emph{Handbook of Mathematical Functions: With
  Formulas, Graphs, and Mathematical Tables}, ser. Applied mathematics
  series.\hskip 1em plus 0.5em minus 0.4em\relax Dover Publications, 1965.

\bibitem{llavona1986approximation}
J.~G. Llavona, \emph{Approximation of Continuously Differentiable
  Functions}.\hskip 1em plus 0.5em minus 0.4em\relax North-Holland Mathematics
  Studies, 1986.

\bibitem{mapurunga2022excitation}
E.~Mapurunga, M.~Gevers, and A.~S. Bazanella, ``Excitation and measurement
  patterns for the identifiability of directed acyclic graphs,'' in \emph{2022
  IEEE 61st Conference on Decision and Control (CDC)}, 2022, pp. 1616--1621.

\bibitem{bang2008digraphs}
J.~Bang-Jensen and G.~Z. Gutin, \emph{Digraphs: Theory, Algorithms and
  Applications}.\hskip 1em plus 0.5em minus 0.4em\relax Springer Science \&
  Business Media, 2008.

\bibitem{krantz2002primer}
S.~G. Krantz and H.~R. Parks, \emph{A Primer of Real Analytic Functions}.\hskip
  1em plus 0.5em minus 0.4em\relax Springer Science \& Business Media, 2002.

\end{thebibliography}

\appendix

\subsection{Proof of Lemma~\ref{lemma:periodic}}\label{app:1}

Let us choose an arbitrary point $\hat x$. Since $f$ is periodic for $p\in[p_0-\epsilon,p_0+\epsilon]$, we have that for all $y\in[\hat x-\epsilon +p_0,\hat x+\epsilon +p_0]$:
   $$
   f(y)=f(\hat x),
   $$
   which implies that the function $f$ is constant and its derivative $f'$ is zero in $[\hat x-\epsilon +p_0,\hat x+\epsilon +p_0]$. Since $f$ is analytic, its derivative $f'$ is also analytic and due to the Principle of isolated zeros for 1-dimensional real analytic functions \cite{krantz2002primer}, the derivative $f'$ is zero for all $x$, which implies that $f$ is constant for \linebreak all $x$.

\subsection{Proof of Lemma~\ref{lemma:identification_one_function}}\label{app:2}

Let us assume that there exists a point $\hat y\in \R^m$ such that $g(\hat y)=a$ and $\tilde g(\hat y)=b$ with $a\neq b$. Then, we would have:
   $$
   f(x+a)=f(x+b) , \text{ for all } x\in\R,
   $$
   which is equivalent to 
   $$
   f(z)=f(z+b-a), \text{ for all } z\in\R,
   $$
   implying that $f$ is periodic with period $b-a$. Since $g(0)=\tilde g(0)=0$, and $g$ and $\tilde{g}$ are continuous, the function $\tilde g-g$ is also continuous and all the values between 0 and $b-a$ belong to its range. 
   Thus, $f$ should be periodic in the interval $[0,b-a]$. But by virtue of Lemma~\ref{lemma:periodic}, the function $f$ should be constant. If $f$ is not constant, we have a contradiction which implies \linebreak that $g=\tilde{g}$. 

\subsection{Proof of Lemma~\ref{lemma:identification_sum_functions}}\label{app:3}

Let us take the derivative with respect to only one variable $x_j$ where $j=1,\ldots,n$. Then, we have:
$$
f'_j(x_j+g_j(y))=f'_j(x_j+\tilde g_j(y)), \text{ for all } x_j\in\R,\text{ } y\in\R^m.
$$
Since the function $f_j$ is analytic, its derivative $f'_j$ is also analytic, and by Lemma~\ref{lemma:identification_one_function}, we can have two cases: either $f'_j$ is constant or $g_j=\tilde g_j$. If $f'_j$ is constant, then $f_j$ is linear or constant (i.e., $f_j=0$), which is a contradiction. 
Therefore $g_j=\tilde g_j$. Following the same procedure for the other variables $x_j$, we complete the proof.

\end{document}